\newtheorem{proposition}{Proposition}
\newtheorem{lemma}{Lemma}
\newtheorem{remark}{Remark}
\newtheorem{theorem}{Theorem}
\newcommand{\R}{\mathbb{R}}
\def\red #1{{\color{red}#1}}
\let\red\relax
\begin{document}
\title{\red{Alternating Direction Implicit (ADI) schemes for a PDE-based image osmosis model}}

\author{L. Calatroni$^1$, C. Estatico$^2$, N. Garibaldi$^2$, S. Parisotto$^3$}


\address{$^1$ CMAP, Ecole Polytechnique 91128 Palaiseau Cedex, France}
\address{$^2$ Dipartimento di Matematica, Universit\`a di Genova, Via Dodecaneso 35, 16146, Genova, Italy}
\address{$^3$ Cambridge Centre for Analysis, Wilberforce Road, CB3 0WA, University of Cambridge, UK}

\ead{luca.calatroni@polytechnique.edu, estatico@dima.unige.it, garibaldi91nicola@gmail.com, sp751@cam.ac.uk}

\begin{abstract}
We consider \emph{Alternating Direction Implicit} (ADI) splitting schemes to compute efficiently the numerical solution of the PDE osmosis model considered by Weickert et al.\ in \cite{weickert} for several imaging applications. The discretised scheme is shown to preserve analogous properties to the continuous model. The dimensional splitting strategy traduces numerically into the solution of simple tridiagonal systems for which standard matrix factorisation techniques can be used to improve upon the performance of classical implicit methods, even for large time steps. Applications to the shadow removal problem are presented.
\end{abstract}

\section{Introduction}
Several imaging tasks can be formulated as the problem of finding a reconstructed version of a degraded and possibly under-sampled image. Removing signal oscillations (\emph{image denoising}) and optical aberrations (\emph{image deblurring}) due to acquisition and transmission faults are common problems in applications such as medical imaging, astronomy, microscopy and many more. Further, the  image interpolation problem of filling in missing  or occluded parts of the image using the information from the surrounding areas is called \emph{image inpainting} \cite{aubert,schoenlieb}. \red{More sophisticated imaging tasks such as shadow removal aim to decompose the acquired image into its component structures, such as cartoon and texture, or into the product of an ``intrinsic" component not subject to light-changes (e.g. illumination) and its counterpart describing illumination changes only, see \cite{finlayson,weiss}. 
For a given image $f$, the problem is}
\begin{equation}  \label{eq:inv_prob}
\text{ find } u\quad \text{s.t}.\quad f(x,y)=l(x,y)u(x,y),\qquad (x,y)\in\Omega,
\end{equation}
where $\Omega\subset\R^2$ denotes the image domain, $l(x,y)$ the illumination image and $u$ the \emph{reflectance} image, i.e.\ the intrinsic image not subject to illumination changes \cite{weiss}.
Due to the ill-posedness of the problem \eqref{eq:inv_prob}, in \cite{weiss} a maximum-likelihood approach is used. In \cite{vogel,weickert}, a PDE-based approach computing the solution of \eqref{eq:inv_prob} as the stationary state of a linear parabolic PDE is considered.  Due to its similarities with the physical process, such model is called \emph{image osmosis}.

\paragraph{Image osmosis.} Given a rectangular image domain $\Omega \subset\R^2$ with Lipschitz boundary $\partial \Omega$ and a positive initial gray-scale image $f:\Omega\to\R^+$, for a given vector field $\bm{d}:\Omega\to\R^2$ the linear image osmosis model proposed in \cite{vogel,weickert} is a drift-diffusion PDE which computes for every $t\in (0,T],~ T>0$ a regularised family of images $\left\{u(x,t)\right\}_{t>0}$ of $f$ by solving:
\begin{equation} \label{eq:osmosis}
\begin{cases}
\partial_t u = \Delta u - \text{div}(\bm{d}u) & \text{ on } \Omega \times (0,T] \\
u(x,0)=f(x) & \text{ on } \Omega \\
\langle \nabla u - \bm{d}u, \bm{n} \rangle = 0 & \text{ on } \partial \Omega \times (0, T],
\end{cases}
\end{equation}
where $\langle \cdot, \cdot \rangle$ denotes the Euclidean scalar product and $\bm{n}$ the outer normal vector on $\partial\Omega$. The extension to RGB images is straightforward by letting the process evolve for each colour channel.  In \cite[Proposition 1]{weickert} the authors prove that any solution of \eqref{eq:osmosis} preserves the average gray value and the non-negativity at any time $t>0$. Furthermore, by setting $\bm{d}:=\bm{\nabla} (\ln  v)$ for a reference image $v>0$, the steady state $w$ of $\eqref{eq:osmosis}$ turns out to be a minimiser of \red{a quadratic energy functional} 
and can be expressed as a rescaled version of $v$ via the formula $w(x)=\frac{\mu_f}{\mu_v}~v(x)$, where $\mu_f$ and $\mu_v$ are the average of $f$ and $v$ over $\Omega$, respectively. 


In \cite{weickert} the osmosis model \eqref{eq:osmosis} is used as a non-symmetric variant of diffusion for several visual computing applications (data compression, face cloning and shadow removal). A general fully discrete framework for this problem is studied in \cite{vogel} where, under some standard conditions on the semi-discretised finite differences operators used, the conservation properties described above are shown to be still valid upon discretisation. We synthesise these results as follows:

\begin{theorem}[\cite{vogel}]\label{theo:vogel}
For a given  $\bm{f}\in\R^N_+$, consider the semi-discretised linear osmosis problem:
\begin{equation}   \label{eq:semi_discrete}
\bm{u}(0) = \bm{f}, \qquad \bm{u}'(t) =\bm{A} \bm{u}(t),\qquad t>0,
\end{equation}
where $\bm{A}\in \R^{N\times N}$ is an irreducible (non-symmetric) matrix with zero-column sum and non-negative off-diagonal entries. For $k\geq 0$, consider the time-discretisation schemes $\bm{u}^{k+1}=\bm{P}\bm{u}^k$:
\vspace{-0.6cm}
\begin{align*}
\bm{P}&:=(\bm{I}+\tau \bm{A}),&&\text{for } \tau < (\max{|a_{i,i}|})^{-1}; \tag*{\textbf{\emph{Forward Euler} (F.E.)}}\\
\bm{P}&:=(\bm{I}-\tau \bm{A})^{-1},&&\text{for any }\tau>0.\tag*{\textbf{\emph{Backward Euler} (B.E.)}}
\end{align*}
\vspace{-0.6cm}

Then, in both cases, $\bm{P}$ is an irreducible, non-negative matrix with strictly positive diagonal entries and unitary column sum and for $\bm{u}(t)$ the following properties hold true:
\begin{enumerate}
\item For every $t>0$, the evolution preserves positivity and the average gray value of $\bm{f}$;
\item The unique steady state is the eigenvector of $\bm{P}$ associated to eigenvalue $1$.
\end{enumerate}
\end{theorem}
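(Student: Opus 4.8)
The plan is to decouple the problem into two layers: first establish the purely algebraic properties of the iteration matrix $\bm{P}$ in each scheme (non-negativity, positive diagonal, unitary column sum, irreducibility), and then read off the dynamical properties (the two conservation laws and the steady state) as consequences, via the Perron--Frobenius theorem. Throughout I use that the hypotheses on $\bm{A}$ force $a_{i,i}\le 0$: since each column sums to zero and the off-diagonal entries are non-negative, $a_{i,i}=-\sum_{j\neq i}a_{j,i}\le 0$. For the Forward Euler choice $\bm{P}=\bm{I}+\tau\bm{A}$ everything is then an elementary computation. The diagonal entries $p_{i,i}=1+\tau a_{i,i}=1-\tau|a_{i,i}|$ are strictly positive precisely because of the step-size restriction $\tau<(\max|a_{i,i}|)^{-1}$; the off-diagonal entries $p_{i,j}=\tau a_{i,j}\ge 0$ inherit non-negativity from $\bm{A}$; and $\sum_i p_{i,j}=1+\tau\sum_i a_{i,j}=1$ gives the unitary column sum. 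Since $p_{i,j}>0$ exactly when $a_{i,j}>0$ for $i\neq j$, the directed graph of $\bm{P}$ coincides with that of $\bm{A}$, so irreducibility is inherited.

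For Backward Euler I would first show $\bm{M}:=\bm{I}-\tau\bm{A}$ is invertible with non-negative inverse. The key observation is that $\bm{M}$ has positive diagonal, non-positive off-diagonal, and is strictly diagonally dominant by columns: using the zero-column-sum property, $|m_{j,j}|=1+\tau|a_{j,j}|>\tau|a_{j,j}|=\sum_{i\neq j}|m_{i,j}|$. Hence $\bm{M}$ is a nonsingular M-matrix, so $\bm{P}=\bm{M}^{-1}\ge 0$; irreducibility of $\bm{A}$ upgrades this to $\bm{P}>0$ entrywise (the inverse of an irreducible M-matrix is strictly positive), which yields in particular strictly positive diagonal entries and irreducibility. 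The unitary column sum transfers from $\bm{M}$ to $\bm{P}$: from $\bm{1}^\top\bm{M}=\bm{1}^\top$, multiplying on the right by $\bm{P}$ gives $\bm{1}^\top=\bm{1}^\top\bm{P}$.

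With these properties established, both conservation laws follow directly from the iteration $\bm{u}^{k+1}=\bm{P}\bm{u}^k$. Non-negativity of $\bm{P}$ together with its strictly positive diagonal preserves positivity, since if $\bm{u}^k>0$ then $(\bm{P}\bm{u}^k)_i\ge p_{i,i}u_i^k>0$. Left-multiplying the iteration by $\bm{1}^\top$ and using $\bm{1}^\top\bm{P}=\bm{1}^\top$ gives $\bm{1}^\top\bm{u}^{k+1}=\bm{1}^\top\bm{u}^k$, so the total, and hence the average, gray value is invariant.

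Finally, for the steady state I would invoke Perron--Frobenius. Because $\bm{P}$ is non-negative, irreducible, with all column sums equal to $1$, the vector $\bm{1}$ is a positive left eigenvector for eigenvalue $1$, and column-stochasticity forces $\rho(\bm{P})=1$, so $1$ is the Perron root. The theorem then guarantees that $1$ is simple and admits a unique (up to scaling) strictly positive right eigenvector $\bm{w}=\bm{P}\bm{w}$, i.e.\ a steady state, with the scaling pinned down by the conserved mass $\bm{1}^\top\bm{w}=\bm{1}^\top\bm{f}$; this gives existence and uniqueness. The positive diagonal (F.E.) or the entrywise positivity (B.E.) makes $\bm{P}$ primitive, so the remaining eigenvalues have strictly smaller modulus and $\bm{u}^k$ converges to this steady state. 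I expect the main obstacle to be the Backward Euler case, namely proving both the non-negativity of the resolvent $(\bm{I}-\tau\bm{A})^{-1}$ and its strict positivity for every $\tau>0$; this is exactly the point where M-matrix theory, rather than a direct computation, is required.
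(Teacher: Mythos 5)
Your proposal is correct, but note that this paper does not actually prove Theorem \ref{theo:vogel}: it is imported wholesale from the reference \cite{vogel} (the text says ``We synthesise these results as follows''), and the paper's own contributions (Propositions \ref{prop:pr} and \ref{propop:agv}) merely \emph{use} its conclusions as a black box. So there is no in-paper proof to compare against; what you have written is a self-contained proof of the cited result, and it is the standard one. Your argument is sound at every step: the Forward Euler case is the elementary computation (positive diagonal from the step-size restriction $\tau<(\max|a_{i,i}|)^{-1}$, non-negative off-diagonals, column sums $1+\tau\cdot 0=1$, and irreducibility because the off-diagonal sign pattern of $\bm{I}+\tau\bm{A}$ coincides with that of $\bm{A}$); the Backward Euler case correctly identifies $\bm{I}-\tau\bm{A}$ as a Z-matrix that is strictly diagonally dominant by columns (using the zero-column-sum identity $\sum_{i\neq j}a_{i,j}=|a_{j,j}|$), hence a nonsingular M-matrix with non-negative inverse, upgraded to an entrywise positive inverse by irreducibility; and the transfer of the column-sum identity via $\bm{1}^\top\bm{M}=\bm{1}^\top \Rightarrow \bm{1}^\top=\bm{1}^\top\bm{M}^{-1}$ is clean. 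Two points deserve explicit credit because they are where a careless write-up would have gaps, and you closed both: first, conservation and positivity follow purely from column-stochasticity plus non-negativity and the positive diagonal, which you state precisely; second, convergence to the steady state needs more than irreducibility (an irreducible column-stochastic matrix can have peripheral spectrum, e.g.\ a cyclic permutation matrix), and you correctly invoke primitivity, obtained from the strictly positive diagonal in the F.E.\ case and from entrywise positivity in the B.E.\ case, to get $\rho$-dominance of the simple Perron eigenvalue $1$ and hence convergence of the iterates, with the limit's scaling pinned down by the conserved mass $\bm{1}^\top\bm{w}=\bm{1}^\top\bm{f}$. This matches the style of argument in the cited literature (Perron--Frobenius and M-matrix theory for column-stochastic discretisations), so your route is not a novel alternative, but it is a complete and correct substitute for the proof the paper omits.
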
 

For large $\tau>0$, the numerical realisation via \red{B.E.}\ requires the inversion of a non-symmetric penta-diagonal matrix, which may be highly costly for large images. \red{In this paper we solve problem \eqref{eq:semi_discrete} using accurate dimensional semi-implicit splitting methods, \cite{hundsbook}. Similarly, fully implicit splitting schemes are considered in \cite{parisotto_art} and applied to large images for cultural heritage.}

\paragraph{ADI splitting methods.} 
Given a domain $\Omega\in\R^s$, a \emph{dimensional splitting} method decomposes the semi-discretised operator $\bm{A}$ of initial boundary value problem \eqref{eq:semi_discrete} into the sum:
\begin{equation} \label{decomp}
\bm{A}=\bm{A_0}+\bm{A_1}+\ldots+\bm{A_s}
\end{equation}
such that the components $\bm{A_j}$, $j=1,\ldots, s,$ encode the linear action of $\bm{A}$ along the space direction $j=1,\ldots, s,$ respectively. The term $\bm{A_0}$ may contain additional contributes coming from mixed directions and non-stiff nonlinear terms. \emph{Alternating directional implicit} (ADI) schemes are time-stepping methods that treat the unidirectional components $\bm{A_j}, j\geq 1$ implicitly and the $\bm{A_0}$ component, if present, explicitly in time. Having in mind a standard finite difference space discretisation, this splitting idea translates into reducing the $s$-dimensional original problem to $s$ one-dimensional problems. 

For imaging applications $s=2$. Let $u_{i,j}$ denote an approximation of $u$ in the grid of size $h$ at point $((i-\frac{1}{2})h, (j-\frac{1}{2})h)$; the discretisation of \eqref{eq:osmosis} considered in \cite{vogel,weickert}  reads:
\begin{align}  \label{eq:discr_operators}
%
%
u'_{i,j} & = \dfrac{u_{i+1,j}-2u_{i,j}+u_{i-1,j}}{h^2} -\dfrac{1}{h} \bigg( d_{1,i+\frac{1}{2},j}\dfrac{u_{i+1,j}+u_{i,j}}{2}-d_{1,i-\frac{1}{2},j}\dfrac{u_{i,j}+u_{i-1,j}}{2} \bigg) \\ 
& + \dfrac{u_{i,j+1}-2u_{i,j}+u_{i,j-1}}{h^2} -\dfrac{1}{h} \bigg( d_{2,i,j+\frac{1}{2}}\dfrac{u_{i,j+1}+u_{i,j}}{2}-d_{2,i,j-\frac{1}{2}}\dfrac{u_{i,j}+u_{i,j-1}}{2} \bigg) =: (\bm{A}_1 + \bm{A}_2) \bm{u}(t)\notag
\end{align}
which can be splitted into the sum of $\bm{A_1}$ and $\bm{A_2}$ where no mixed and/or nonlinear terms appear ($\bm{A_0}=\bm{0}$). Following \cite{hundsbook}, we recall now the two ADI schemes considered in this paper. 

\paragraph{The Peaceman-Rachford scheme.} The first method considered is the second-order accurate \emph{Peaceman-Rachford} ADI scheme.
For every $k\geq 0$ and time-step $\tau>0$, compute an approximation $\bm{u}^{k+1}$ via the updating rule:
\begin{equation}  \label{eq: peacemanrach}
\left\{\begin{aligned}
\bm{u}^{k+1/2}&=\bm{u}^{k}+\frac{\tau}{2}\bm{A_1u}^{k}+\frac{\tau}{2}\bm{A_2u}^{k+1/2}, \\
 \bm{u}^{k+1}&=\bm{u}^{k+1/2}+\frac{\tau}{2}\bm{A_1u}^{k+1}+\frac{\tau}{2}\bm{A_2u}^{k+1/2},
\end{aligned}\right.
\end{equation}
where \red{F.E.}\ and \red{B.E.}\ are applied alternatively and in a symmetric way.

\paragraph{The Douglas scheme.} A more general ADI decomposition accommodating also the general case $\bm{A_0}\neq \bm{0}$ in \eqref{decomp} is the \emph{Douglas} method. For $k\geq 0$, $\tau>0$ and $\theta\in [0,1]$, the updating rule reads
\begin{equation} \label{eq: Douglas}
\left\{ \begin{aligned}
\bm{y}^0&=\bm{u}^k+\tau \bm{Au}^k & \\
\bm{y}^j &=\bm{y}^{j-1}+\theta\tau(\bm{A_j y}^j-\bm{A_j u}^k),\quad &j=1,2 \\
\bm{u}^{k+1}&=\bm{y}^2.&
\end{aligned}\right.
\end{equation}
In words, the numerical approximation in each time step is computed by applying at first a \red{F.E.} predictor and then it is stabilised by intermediate steps where just the unidirectional components $\bm{A_j}$ of the splitting \eqref{decomp} appear, weighted by $\theta$ whose size balances the implicit/explicit behaviour of these steps. 
The time-consistency order of the scheme is equal to two for $\bm{A_0}=\bm{0}$ and $\theta=1/2$, and it is of order one otherwise. 

\smallskip

For $s=2$ both schemes \eqref{eq: peacemanrach} and \eqref{eq: Douglas} are unconditionally stable, see \cite{hundsbook}. However, for large time steps, the time-accuracy may suffer due to the presence of the explicit steps.


\paragraph{Scope of the paper.} In this work, we apply schemes \eqref{eq: peacemanrach} and \eqref{eq: Douglas} to solve \eqref{eq:semi_discrete} and show that similar conservation properties as in Theorem \ref{theo:vogel} hold. Compared to standard implicit methods used in \cite{vogel,weickert}, such schemes renders a much more efficient and accurate numerical solution.

\section{ADI methods for image osmosis}  \label{sec:ADI_theor}

We show that ADI schemes \eqref{eq: peacemanrach} and \eqref{eq: Douglas} show similar conservation properties as in Theorem \ref{theo:vogel}.
 
\begin{proposition} \label{prop:pr}
Let  $\bm{f}\in\R^N_+$ and $\tau < 2\Bigl(\max\left\{ \max |a^1_{i,i}| , \max |a^2_{i,i}| \right\}\Bigr)^{-1}$. Then, the Peaceman-Rachford scheme \eqref{eq: peacemanrach} on the splitting \eqref{eq:discr_operators} preserves the average gray value, the positivity and converges to a unique steady state.
\end{proposition}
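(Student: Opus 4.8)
The plan is to collapse the two half-steps of \eqref{eq: peacemanrach} into a single one-step recursion $\bm{u}^{k+1}=\bm{P}\bm{u}^{k}$ and then to show that $\bm{P}$ is a \emph{primitive, column-stochastic} matrix; the three assertions then follow in complete analogy with Theorem~\ref{theo:vogel}. Solving the first line of \eqref{eq: peacemanrach} for $\bm{u}^{k+1/2}$ and inserting it into the second, one obtains $\bm{u}^{k+1}=\bm{P}\bm{u}^{k}$ with
\begin{equation*}
\bm{P}=\Bigl(\bm{I}-\tfrac{\tau}{2}\bm{A_1}\Bigr)^{-1}\Bigl(\bm{I}+\tfrac{\tau}{2}\bm{A_2}\Bigr)\Bigl(\bm{I}-\tfrac{\tau}{2}\bm{A_2}\Bigr)^{-1}\Bigl(\bm{I}+\tfrac{\tau}{2}\bm{A_1}\Bigr).
\end{equation*}
Each of the four factors is exactly a backward- or forward-Euler propagator with half step $\tau/2$ associated to one of the unidirectional operators $\bm{A_1},\bm{A_2}$, so the whole argument reduces to controlling these four factors separately and then multiplying.

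First I would record that each $\bm{A_j}$ inherits from $\bm{A}$ the properties of zero column sum ($\bm{1}^{\top}\bm{A_j}=\bm{0}^{\top}$) and non-negative off-diagonal entries, whence its diagonal entries satisfy $a^{j}_{i,i}\le 0$. I cannot invoke Theorem~\ref{theo:vogel} directly on $\bm{A_j}$, since a single-direction operator is \emph{not} irreducible; however, the properties I need from each factor do not require irreducibility and can be checked by hand. For the forward factors $\bm{I}+\tfrac{\tau}{2}\bm{A_j}$ the off-diagonal entries are $\tfrac{\tau}{2}a^{j}_{i,l}\ge 0$, the diagonal entries $1+\tfrac{\tau}{2}a^{j}_{i,i}$ are strictly positive precisely because the stated bound gives $\tfrac{\tau}{2}|a^{j}_{i,i}|<1$, and the column sums equal $1$ by $\bm{1}^{\top}\bm{A_j}=\bm{0}^{\top}$. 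For the backward factors, $\bm{I}-\tfrac{\tau}{2}\bm{A_j}$ is a nonsingular $M$-matrix (a $Z$-matrix with unit positive column sums), so its inverse is entrywise non-negative, and from $\bm{1}^{\top}(\bm{I}-\tfrac{\tau}{2}\bm{A_j})=\bm{1}^{\top}$ one gets $\bm{1}^{\top}(\bm{I}-\tfrac{\tau}{2}\bm{A_j})^{-1}=\bm{1}^{\top}$. Thus all four factors are non-negative with unit column sums, hence so is their product $\bm{P}$. This immediately yields the first claim: $\bm{1}^{\top}\bm{u}^{k+1}=\bm{1}^{\top}\bm{P}\bm{u}^{k}=\bm{1}^{\top}\bm{u}^{k}$ conserves the total mass (hence the average gray value), while $\bm{P}\ge 0$ propagates non-negativity of $\bm{u}^{k}$.

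For the steady state I would upgrade non-negativity of $\bm{P}$ to strict positivity, which is the crux of the argument. The two backward resolvents are block-diagonal and \emph{fully positive within blocks}: $(\bm{I}-\tfrac{\tau}{2}\bm{A_1})^{-1}$ is positive along each line of fixed $j$ (inverse of an irreducible tridiagonal $M$-matrix) and $(\bm{I}-\tfrac{\tau}{2}\bm{A_2})^{-1}$ is positive along each line of fixed $i$. Reading the factors right to left and using the strictly positive diagonals of the forward propagators, I can connect any node $q=(i_q,j_q)$ to any node $p=(i_p,j_p)$ by a path of strictly positive weights: stay at $q$ under $\bm{I}+\tfrac{\tau}{2}\bm{A_1}$, move to $(i_q,j_p)$ under the fixed-$i$ resolvent, stay under $\bm{I}+\tfrac{\tau}{2}\bm{A_2}$, then move to $p$ under the fixed-$j$ resolvent. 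Hence $\bm{P}>0$ entrywise, so $\bm{P}$ is primitive. Perron--Frobenius then gives that the spectral radius $\rho(\bm{P})=1$ (it is an eigenvalue because $\bm{1}$ is a positive left eigenvector) is a simple, strictly dominant eigenvalue with a unique positive right eigenvector $\bm{w}$; consequently $\bm{P}^{k}$ converges to the rank-one projector $\bm{w}\bm{1}^{\top}/(\bm{1}^{\top}\bm{w})$ and $\bm{u}^{k}=\bm{P}^{k}\bm{f}$ converges to the unique steady state $\tfrac{\bm{1}^{\top}\bm{f}}{\bm{1}^{\top}\bm{w}}\,\bm{w}$, fixed by the conserved average.

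The main obstacle I anticipate is precisely this strict positivity/primitivity of the product: the individual directional factors are reducible, so irreducibility is recovered only after composition, and I must argue that the fixed-$i$ and fixed-$j$ resolvents together connect every pair of grid nodes. A secondary point requiring care is that Theorem~\ref{theo:vogel} cannot be applied verbatim to the $\bm{A_j}$ (irreducibility fails), so the column-stochasticity of each factor must be re-derived directly from the zero-column-sum and sign structure together with the step-size bound, as above.
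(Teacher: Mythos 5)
Your factorisation of the Peaceman--Rachford step into
\[
\bm{P}=\Bigl(\bm{I}-\tfrac{\tau}{2}\bm{A_1}\Bigr)^{-1}\Bigl(\bm{I}+\tfrac{\tau}{2}\bm{A_2}\Bigr)\Bigl(\bm{I}-\tfrac{\tau}{2}\bm{A_2}\Bigr)^{-1}\Bigl(\bm{I}+\tfrac{\tau}{2}\bm{A_1}\Bigr)
\]
is exactly the one used in the paper, and your conclusions are correct; but the way you justify the properties of the four factors is genuinely different and, in fact, more rigorous than the paper's own argument. The paper simply asserts that each $\bm{A_i}$ is ``a one-dimensional implicit/explicit discretised osmosis operator satisfying the assumptions of Theorem~\ref{theo:vogel}'' and reads off non-negativity, unit column sum, positive diagonal and irreducibility of each factor from that theorem. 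As you point out, this appeal cannot be taken literally: as $N\times N$ matrices the unidirectional operators $\bm{A_1},\bm{A_2}$ are block-diagonal, hence reducible, so the irreducibility hypothesis of Theorem~\ref{theo:vogel} fails for them. Your proposal repairs this in two steps that the paper does not carry out: (i) you re-derive column-stochasticity and non-negativity of the forward factors directly from the zero-column-sum and sign structure together with the step-size bound $\tfrac{\tau}{2}|a^{j}_{i,i}|<1$, and of the backward factors from the $M$-matrix property; (ii) you recover the global coupling only at the level of the product, showing $\bm{P}>0$ entrywise by the explicit path $q\to(i_q,j_p)\to p$ through the within-block positive resolvents, and then conclude convergence of $\bm{u}^k=\bm{P}^k\bm{f}$ to the unique steady state via Perron--Frobenius for primitive matrices. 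The paper, by contrast, stops at the statement that the steady state is the eigenvector of $\bm{P}$ with eigenvalue one, without establishing primitivity or convergence of the iterates; your argument supplies exactly the missing piece. What the paper's shortcut buys is brevity; what yours buys is an actual proof of the third assertion of the proposition.

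One caveat you should make explicit: your claim that each backward resolvent is ``fully positive within blocks'' requires each one-dimensional tridiagonal block of $\bm{A_1}$ and $\bm{A_2}$ to be irreducible, i.e.\ all sub- and super-diagonal entries $\tfrac{1}{h^2}\pm\tfrac{d}{2h}$ to be strictly positive (strict inequality $|d|<2/h$, not just non-negativity). This hypothesis is implicitly assumed by the paper as well, but neither irreducibility of the full matrix $\bm{A}$ nor non-negativity of its off-diagonal entries formally implies it, so it should be stated as an assumption on the discretised drift.
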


\begin{proof}
We write the Peaceman-Rachford \eqref{eq: peacemanrach} iteration as
\[
\bm{u}^{k+1} = \left(\bm{I} -\frac{\tau}{2} \bm{A_1}\right)^{-1} \left(\bm{I} +\frac{\tau}{2} \bm{A_2}\right) \left(\bm{I} -\frac{\tau}{2} \bm{\red{A_2}}\right)^{-1} \left(\bm{I} +\frac{\tau}{2}\bm{A_1}\right) \bm{u}^{k} ,
\]
and observe that for $i=1,2$ the matrices $\bm{P^-_i}:=\left(\bm{I} -\frac{\tau}{2} \bm{A_i}\right)^{-1}$ and $\bm{P^+_i}:=\left(\bm{I} +\frac{\tau}{2} \bm{A_i}\right)$, are non-negative and irreducible with unitary column sum and positive diagonal entries being each $\bm{A_i}$ a one-dimensional implicit/explicit discretised osmosis operator satisfying the assumptions of Theorem \ref{theo:vogel}. Therefore, at every implicit/explicit half-step and using the restriction on $\tau$, the average gray-value and positivity are conserved. Furthermore, the unique steady state is the eigenvector of the operator $\bm{P}:=\bm{P^-_1}\bm{P^+_2}\bm{P^-_2}\bm{P^+_1}$ associated to the eigenvalue to one.
\qedhere
\end{proof}

The following lemma is useful to prove a similar result for the Douglas scheme \eqref{eq: Douglas}.

\begin{lemma}   \label{lemma:sum}
If $\bm{C}=(c_{i,j})\in\R^{N\times N}$ and $\bm{D}=(d_{i,j})\in\R^{N\times N}$ s.t. $
\sum_{i=1}^N c_{i,j} =: c$ and $\sum_{i=1}^N d_{i,j} =: d$ for every $j=1,\ldots,N$, then, the matrix $\bm{B}:=\bm{CD}$ has column sum equal to $cd$.
\end{lemma}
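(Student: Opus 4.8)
The plan is to verify the claim by a direct computation of the column sums of $\bm{B}=\bm{CD}$ straight from the definition of matrix multiplication, exploiting that all sums involved are finite. First I would fix an arbitrary column index $k\in\{1,\ldots,N\}$ and write the entries of the product as $b_{i,k}=\sum_{j=1}^N c_{i,j}\,d_{j,k}$. The quantity to be evaluated is the sum over the row index $i$ of these entries, i.e.\ $\sum_{i=1}^N b_{i,k}=\sum_{i=1}^N\sum_{j=1}^N c_{i,j}\,d_{j,k}$, and the goal is to show this equals $cd$ for every $k$.

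The key step is to interchange the two finite summations, which is always legitimate here since no convergence issues arise, so as to isolate the column sum of $\bm{C}$. After swapping I would obtain $\sum_{j=1}^N\bigl(\sum_{i=1}^N c_{i,j}\bigr)\,d_{j,k}$, and then invoke the first hypothesis $\sum_{i=1}^N c_{i,j}=c$ valid for every $j$. The inner sum thus collapses to the constant $c$, which can be factored out of the remaining summation to leave $c\sum_{j=1}^N d_{j,k}$. A second application of the hypotheses, namely $\sum_{j=1}^N d_{j,k}=d$ valid for every $k$, yields the value $cd$, which is manifestly independent of the chosen column index $k$. This establishes that $\bm{B}$ has constant column sum equal to $cd$.

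I do not expect any genuine obstacle: the argument is a one-line double-sum manipulation. The only point deserving minor care is keeping the two roles of the summation index $j$ straight, since it serves simultaneously as the column index of $\bm{C}$ and the row index of $\bm{D}$, and noticing that the hypothesis on $\bm{C}$ must be applied to its columns (the inner sum over $i$ after reordering) while the hypothesis on $\bm{D}$ is applied only afterwards. Because every sum is finite, the interchange of summation order requires no further justification, and the result follows immediately.
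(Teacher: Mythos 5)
Your proposal is correct and is essentially identical to the paper's own proof: both expand $b_{i,k}=\sum_j c_{i,j}d_{j,k}$, interchange the two finite sums, apply the hypothesis on the column sums of $\bm{C}$ and then of $\bm{D}$ to obtain $cd$. The only difference is notational (your index roles for $j$ and $k$ are swapped relative to the paper), which is immaterial.
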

\begin{proof}
Writing each element  $b_{i,j}$ in terms of the elements of $\bm{C}$ and $\bm{D}$, we have that for every $j$:
\[
\sum_{i=1}^N b_{i,j} = \sum_{i=1}^N\sum_{k=1}^N c_{i,k}d_{k,j} = \sum_{k=1}^N\Bigl( d_{k,j} \sum_{i=1}^N c_{i,k}\Bigr) = \sum_{k=1}^N \Bigl( d_{k,j}\cdot c \Bigr) = c \sum_{k=1}^N d_{k,j} = c d.\qedhere
\]
\end{proof}

\begin{proposition}  \label{propop:agv}
Let $f\in\R^{N}_+$, $\tau>0$ and $\theta\in[0,1]$. Then, the Douglas scheme \eqref{eq: Douglas} applied to the splitted semi-discretised scheme \eqref{eq:discr_operators} preserves the average gray value.
\end{proposition}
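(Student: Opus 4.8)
The statement to be shown is that the one-step map $\bm{u}^k\mapsto\bm{u}^{k+1}$ defined by the Douglas scheme \eqref{eq: Douglas} conserves the total gray mass $\sum_i u_i$, which (since $N$ is fixed) is equivalent to conserving the average gray value. Writing $\bm{e}=(1,\dots,1)^\top\in\R^N$ for the vector of all ones, the claim is therefore exactly $\bm{e}^\top\bm{u}^{k+1}=\bm{e}^\top\bm{u}^k$. The starting observation is the zero-column-sum property: each unidirectional block $\bm{A_j}$ is a one-dimensional discretised osmosis operator satisfying the hypotheses of Theorem \ref{theo:vogel} (as already used in the proof of Proposition \ref{prop:pr}), so it has zero column sum; hence $\bm{e}^\top\bm{A_1}=\bm{e}^\top\bm{A_2}=\bm{0}^\top$ and, by additivity, $\bm{e}^\top\bm{A}=\bm{0}^\top$.

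The plan is to left-multiply by $\bm{e}^\top$ and propagate the identity through the three stages of \eqref{eq: Douglas}. First, the forward-Euler predictor yields $\bm{e}^\top\bm{y}^0=\bm{e}^\top\bm{u}^k+\tau\,\bm{e}^\top\bm{A}\bm{u}^k=\bm{e}^\top\bm{u}^k$, using $\bm{e}^\top\bm{A}=\bm{0}^\top$. Next I rewrite each corrector step $j=1,2$ in implicit-solve form as $(\bm{I}-\theta\tau\bm{A_j})\bm{y}^j=\bm{y}^{j-1}-\theta\tau\bm{A_j}\bm{u}^k$, i.e. $\bm{y}^j=(\bm{I}-\theta\tau\bm{A_j})^{-1}\bigl(\bm{y}^{j-1}-\theta\tau\bm{A_j}\bm{u}^k\bigr)$, which is well defined since $(\bm{I}-\theta\tau\bm{A_j})^{-1}$ exists for all $\tau>0$ by the backward-Euler part of Theorem \ref{theo:vogel}.

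The key step is to verify that the implicit solve preserves column sums. From $\bm{e}^\top\bm{A_j}=\bm{0}^\top$ we get $\bm{e}^\top(\bm{I}-\theta\tau\bm{A_j})=\bm{e}^\top$; right-multiplying by the inverse gives $\bm{e}^\top(\bm{I}-\theta\tau\bm{A_j})^{-1}=\bm{e}^\top$ (equivalently, Lemma \ref{lemma:sum} shows the inverse has unitary column sum, being the inverse of a matrix with unitary column sum). Combined with $\bm{e}^\top\bm{A_j}\bm{u}^k=0$, this yields $\bm{e}^\top\bm{y}^j=\bm{e}^\top\bigl(\bm{y}^{j-1}-\theta\tau\bm{A_j}\bm{u}^k\bigr)=\bm{e}^\top\bm{y}^{j-1}$ for $j=1,2$. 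Chaining the three identities gives $\bm{e}^\top\bm{u}^{k+1}=\bm{e}^\top\bm{y}^2=\bm{e}^\top\bm{y}^1=\bm{e}^\top\bm{y}^0=\bm{e}^\top\bm{u}^k$, which is the assertion.

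I expect the only delicate point to be the middle step, namely passing the column-sum identity from $(\bm{I}-\theta\tau\bm{A_j})$ to its inverse: this needs the invertibility granted by Theorem \ref{theo:vogel} together with Lemma \ref{lemma:sum}. Everything else is bookkeeping on the left-action of $\bm{e}^\top$, and notably the argument is uniform in $\theta\in[0,1]$ and requires no restriction on $\tau$, consistent with the hypotheses of the proposition.
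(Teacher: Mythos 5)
Your proof is correct, but it takes a genuinely different route from the paper's. The paper first consolidates the three stages of \eqref{eq: Douglas} into a single one-step matrix, $\bm{u}^{k+1}=\bigl(\bm{I}+\tau\bm{P_2}\bm{P_1}\bm{A}\bigr)\bm{u}^k$ with $\bm{P_i}=(\bm{I}-\theta\tau\bm{A_i})^{-1}$, quotes Theorem \ref{theo:vogel} for the unitary column sums of $\bm{P_1},\bm{P_2}$, and then invokes Lemma \ref{lemma:sum} to conclude that $\bm{P_2}\bm{P_1}\bm{A}$ has zero column sum, hence that the one-step matrix has unitary column sum and preserves the mean. You instead propagate the invariant $\bm{e}^\top\bm{y}$ stage by stage: the predictor preserves it since $\bm{e}^\top\bm{A}=\bm{0}^\top$, and each corrector preserves it since $\bm{e}^\top$ is a left eigenvector with eigenvalue $1$ of $(\bm{I}-\theta\tau\bm{A_j})^{-1}$ while the explicit term $\theta\tau\bm{A_j}\bm{u}^k$ is annihilated by $\bm{e}^\top$. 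This buys two things: you never need the condensed one-step form (whose derivation is the only real computation in the paper's proof), and you never need Lemma \ref{lemma:sum} at all; moreover you prove, rather than quote, the unit-column-sum property of the implicit solves via $\bm{e}^\top(\bm{I}-\theta\tau\bm{A_j})=\bm{e}^\top\Rightarrow\bm{e}^\top(\bm{I}-\theta\tau\bm{A_j})^{-1}=\bm{e}^\top$. Two remarks. First, your parenthetical claim that this last fact follows ``equivalently'' from Lemma \ref{lemma:sum} is a misattribution: the lemma assumes \emph{both} factors have constant column sums, which is precisely what is unknown for the inverse, so drop that aside---your right-multiplication argument is the correct and complete justification. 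Second, you could shorten further by left-multiplying the implicit equation $(\bm{I}-\theta\tau\bm{A_j})\bm{y}^j=\bm{y}^{j-1}-\theta\tau\bm{A_j}\bm{u}^k$ directly by $\bm{e}^\top$, which kills both $\bm{A_j}$ terms at once and requires only the existence, not the column sums, of the inverse. In substance both proofs rest on the same two facts (zero column sums of $\bm{A_1},\bm{A_2}$ and invertibility of the implicit operators), but yours is the more elementary and self-contained of the two.
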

\begin{proof}
For every $k\geq 0$, we write the Douglas iteration \eqref{eq: Douglas} as
\begin{align*}
\bm{u}^{k+1} &  = \left( \bm{I}-\theta \tau \bm{A_2} \right)^{-1}  \left[ (\bm{I}-\theta \tau \bm{A_1})^{-1} \Bigl( (\bm{I}-\theta \tau \bm{A_1})+\tau \bm{A}\Bigr)-\theta \tau \bm{A_2}\right] \bm{u}^k  \\
& = \Bigl(\bm{I} + \tau (\bm{I}-\theta \tau \bm{A_2})^{-1} (\bm{I}-\theta \tau \bm{A_1})^{-1}\bm{A} \Bigr) \bm{u}^k = :\Bigl(\bm{I} + \tau\bm{P_2}\bm{P_1}\bm{A} \Bigr) \bm{u}^k
\end{align*}
where both $\bm{P_1}$ and $\bm{P_2}$ are non-negative, irreducible, with unitary column sum and strictly positive diagonal entries by Theorem \ref{theo:vogel}, while $\bm{A}=\bm{A_1}+\bm{A_2}$ is zero-column sum being the standard discretised osmosis operator \eqref{eq:discr_operators}. By Lemma \ref{lemma:sum}, the operator $\bm{B}:=\bm{P_2P_1A}$ has zero column sum and the operator $\bm{P}:=\bm{I}+\tau\bm{B}$ has unitary column sum. Thus, for every $k\geq 0$:
\[
\frac{1}{N} \sum_{i=1}^N u^{k+1}_i = \frac{1}{N} \sum_{i=1}^N \sum_{j=1}^N p_{i,j}u^{k}_j = \frac{1}{N} \sum_{j=1}^N  \Bigl(\sum_{i=1}^N  p_{i,j}\Bigr)u^{k}_j =  \frac{1}{N} \sum_{j=1}^N u_j^k.\qedhere
\]
\end{proof}

\begin{remark}
There is no guarantee that the off-diagonal entries of $\bm{B}$ are non-negative. Therefore, it is not possible to apply directly Theorem \ref{theo:vogel} to conclude that the iterates $\left\{\bm{u}^k\right\}_{k\geq 0}$ remain positive and converge to a unique steady state. However, our numerical tests suggest that both properties remain valid. A rigorous proof of these properties is left for future research. 
\end{remark}

\section{Numerical results} \label{sec:ADI_num}
We present in Algorithm \ref{alg: peaceman LU} and \ref{alg: douglas LU} the pseudocode for the Peaceman-Rachford \eqref{eq: peacemanrach} and the Douglas \eqref{eq: Douglas} schemes, respectively, when applied to images of size $m\times n$ pixels, with $N:=mn$.
We recall that in \cite{vogel, weickert}, the non-split problem is solved iteratively by \red{B.E.} using \texttt{BiCGStab}, whose performance  is influenced by the accuracy and maximum iterations required.
 
Due to the structure of the ADI matrices $\bm{A_1}$ and $\bm{A_2}$, a tridiagonal \texttt{LU} factorisation can be used for improved efficiency after a  permutation on matrix $\bm{A_2}$ based on \cite{saad} to reduce the bandwidth to one: this is convenient for images with large $m$, where standard methods become prohibitively expensive. Without splitting, one \texttt{LU} factorisation plus $T\tau^{-1}$ system resolutions for the penta-diagonal matrix $\bm{A}$, with lower and upper bandwidth equal to $m$, costs $\mathcal{O}(2m^3 n + 4T\tau^{-1} m^2n)$ \emph{flops} overall. Using splitting, the computational cost is reduced to $\mathcal{O}(6(m n-1) + T\tau^{-1}(10mn-8))$ \emph{flops} because of the use of tridiagonal $1$-bandwidth matrices. For the same reason, the inverse operators $\bm{A_1}$ and $\bm{A_2}$ require less storage space than the full operator $\bm{A}$ while the inverse of $\bm{A}$ has significantly more non-zeros entries.  Note that since $\tau$ is constant, all the \texttt{LU} factorisations can be computed only once before the main loop. 

\vspace{-0.5em}
\noindent
\begin{minipage}[t]{0.495\textwidth}\null\small
\begin{algorithm}[H]
\caption{LU Peaceman-Rachford}
\label{alg: peaceman LU}
\SetKwData{maxiter}{maxiter}
\SetKwData{colorchannel}{color channel}
\SetKwInOut{Input}{Input}
\SetKwInOut{Parameters}{Parameters}
\SetKwInOut{Identification}{Identification}
\SetKwInOut{Initialization}{Initialization}
\SetKwInOut{Update}{Update}
\Input{$u^0$, $\mathtt{A=A_1+A_2}$;}
\Parameters{$\tau$, $T$;}
\ForEach{$c$ in \colorchannel }
{
\vspace{-1.4em}
\begin{align*}
&\mathtt{p}_c = \mathtt{symrcm}(\mathtt{A_2}_c);\\
&\mathtt{E1}_c = \left(\,\mathtt{I} + 0.5\,\tau\,\mathtt{A_1}_c\,\right);\\
&\mathtt{E2}_c  = \left(\,\mathtt{I} + 0.5\,\tau\,\mathtt{A_2}_c(\mathtt{p_c}\,;\,\mathtt{p_c})\,\right);\\
&[\mathtt{L1}_c,\,\mathtt{U1}_c] = \mathtt{lu}(\,\mathtt{I} -  0.5\,\tau\,\mathtt{A_1}_c\,);\\
&[ \mathtt{L2}_c,\, \mathtt{U2}_c] = \mathtt{lu}(\,\mathtt{I} -  0.5\,\tau\,\mathtt{A_2}_c(\mathtt{p_c}\,;\,\mathtt{p_c})\,).
\end{align*}
\For{$k=0$ \KwTo $T-\tau$ }
{
\vspace{-0.9em}
\begin{align*}
&z_1 = \mathtt{E1}_c \, u_c^k;\\
&y_1(\mathtt{p}_c) = \mathtt{U2}_c\,\backslash\,( \mathtt{L2}_c \,\backslash \,  z_1(\mathtt{p}_c));\\
&z_2(\mathtt{p}_c) = \mathtt{E2}_c \, y_1(\mathtt{p}_c);\\
&u_c^{k+\tau} = \mathtt{U1}_c \,\backslash\, ( \mathtt{L1}_c\,\backslash\,   z_2);
\end{align*}
}
}
\Return{$u^{T}$.}
\end{algorithm}
\end{minipage}
\begin{minipage}[t]{0.495\textwidth}\null\small
\begin{algorithm}[H]
\caption{LU-Douglas}
\label{alg: douglas LU}
\SetKwData{maxiter}{maxiter}
\SetKwData{colorchannel}{color channel}
\SetKwInOut{Input}{Input}
\SetKwInOut{Parameters}{Parameters}
\SetKwInOut{Identification}{Identification}
\SetKwInOut{Initialization}{Initialization}
\SetKwInOut{Update}{Update}
\Input{$u^0$, $\mathtt{A=A_1+A_2}$;}
\Parameters{$\tau$,  $T$, $\theta\in [0,1]$;}
\ForEach{$c$ in \colorchannel}
{
\vspace{-1.4em}
\begin{align*}
&\mathtt{p}_c  = \mathtt{symrcm}(\mathtt{A_2}_c);\\
&\mathtt{E1}_c = \tau\,\mathtt{A_1}_c\,; ~~  \mathtt{E2}_c  = \,\tau\,\mathtt{A_2}_c(\mathtt{p_c}\,;\,\mathtt{p_c})\,\\
&\mathtt{T1}_c = \tau\,\theta\,\mathtt{A_1}_c\,;  \mathtt{T2}_c  = \tau\,\theta\,\mathtt{A_2}_c(\mathtt{p_c}\,;\,\mathtt{p_c})\,;\\
&[\mathtt{L1}_c,\,\mathtt{U1}_c] = \mathtt{lu}(\,\mathtt{I} -  \tau\,\theta\,\mathtt{A_1}_c\,);\\
&[ \mathtt{L2}_c,\, \mathtt{U2}_c] = \mathtt{lu}(\,\mathtt{I} -  \tau\,\theta\,\mathtt{A_2}_c(\mathtt{p_c}\,;\,\mathtt{p_c})\,).
\end{align*}
\For{$k=0$ \KwTo $T-\tau$}
{
\vspace{-1.4em}
\begin{align*}
& y_{01} = \mathtt{E1}_c \, u_c^k\,; ~~
y_{02}(\mathtt{p}_c) = \mathtt{E2}_c \, u_c^k(\mathtt{p}_c)\,;\\ 
&z_1 = u_c^k + y_{01} + y_{02}-\mathtt{T1}_c\, u_c^k;\\
&y_1 = \mathtt{U1}_c\,\backslash\,( \mathtt{L1}_c \,\backslash \,  z_1);\\
&z_2(\mathtt{p}_c) = y_1(\mathtt{p}_c) - \mathtt{T2}_c\, u_c^k;\\
&u_c^{k+\tau}(\mathtt{p}_c)  = \mathtt{U2}_c \,\backslash\, ( \mathtt{L2}_c\,\backslash   \,z_2(\mathtt{p}_c) );
\end{align*}
\vspace{-2em}
}
}
\vspace{-0.15em}
\Return{$u^{T}$.}
\end{algorithm}
\end{minipage}

As a toy example, we let the osmosis model \eqref{eq:osmosis} evolve towards a known steady state starting from an initial constant image. In Table \ref{tab: numerical results} we compare the numerical solutions at \red{$T=5000$} for different fixed time-steps $\tau$ and values of $\theta$. \red{The benchmark solution $\bar{u}$ is computed using Exponential Integrators \cite{higham}, which for linear equations are exact-in-time solvers}. Similarly as in \cite{vogel}, the approach is solved without splitting by \texttt{BiCGStab} (with parameters \red{\texttt{tol=1e-07} and \texttt{maxiter=3e+05})}. For comparison, we test the direct $\texttt{LU}$ factorisation applied to the full problem. The order of the methods for the \red{relative Root Mean Square (RMS) error (rRMSE) $\texttt{rms}(u_T - \bar{u})/\texttt{rms}(\bar{u}$)} are plotted in~Figure~\ref{fig: order and solution comparison}. \red{For large $\tau$, ADI methods are up to} 10$\times$ faster than methods solving the problem without any splitting.
\begin{table}[!h]
\caption{\red{Full vs.\ ADI splitting $p$-order methods at T=5000}. Input: $240\times 250$ pixel image.}
\vspace{-2em}
\begin{center}
\scriptsize
\begin{tabular}{l|c|c|cc|cc|cc|cc}
\br
\multicolumn{3}{c}{} &\multicolumn{2}{c}{$\tau = 0.1$} & \multicolumn{2}{c}{$\tau = 1$} & \multicolumn{2}{c}{$\tau = 10$} & \multicolumn{2}{c}{$\tau = 100$}\\
\mr
Method & $\theta$  & $p$		& Time & rRMSE & Time & rRMSE   & Time & rRMSE & Time & rRMSE  \\
\mr
\texttt{BiCGStab} (5 d.)             & 1 & 1 & 584.90 s. & 3.18e-06 & 104.86 s. & 7.74e-06 & 25.73 s. & 6.82e-05 & 13.91 s. & 6.64e-04\\
\texttt{LU}  (5 d.)                      & 1 & 1 & \textcolor{blue}{\textbf{3165.61 s.}} & 3.60e-07 & 310.01 s. & 3.60e-06 & 33.83 s. & 3.60e-05 & 3.07 s. & 3.60e-04\\
\texttt{Douglas} ADI					& 1 & 1 & 529.59 s  &  3.60e-07 & 57.36 s.  & 3.61e-06 & 6.12 s. & 3.67e-05 & 0.62 s. & 1.47e-02\\ 
\mr
\texttt{BiCGStab}  (5 d.)             & 0.5 &  2 & 600.70 s.   & 1.19e-06 & 97.54 s.  & 2.94e-05 & 33.72 s. & 4.45e-06 & 20.60 s. &  2.05e-02\\
\texttt{LU}  (5 d.)                      & 0.5 &  2 &  \textcolor{blue}{\textbf{3652.58 s.}} & 7.93e-12 &  321.95 s. & 3.11e-10 & 32.47 s. & 3.11e-08 & 3.20 s. & 2.05e-02\\
\texttt{Douglas}  ADI                   & 0.5 &  2 & 523.85 s.   & 3.56e-11 & 53.06 s. & 3.56e-09 & 5.18 s.     & 3.56e-07 & 0.52 s. & 2.32e-02\\
\texttt{P}.-\texttt{R.} ADI            & -     & 2 & 463.68 s.    & 6.32e-11 & 36.35 s.  & 6.32e-09    & 3.56 s. & 6.33e-07 & 0.38 s. & \textcolor{blue}{\textbf{8.10e-02}}\\
\br
\end{tabular}
\end{center}
\label{tab: numerical results}
\end{table}
\vspace{-1em}

In Figure \ref{fig:shadow_removal} the ADI numerical solution of the shadow-removal problem \cite{weickert} is shown, starting from a shadowed image $f$ and setting $\bm{d}=\bm{\nabla}(\ln f)=0$ on the shadow boundary. In order to correct the diffusion artefacts, an inpainting correction (e.g.\ \cite{arias}) is applied.
\begin{figure}
\centering
\begin{subfigure}[t]{0.48\textwidth}
\includegraphics[width=1\textwidth]{\detokenize{./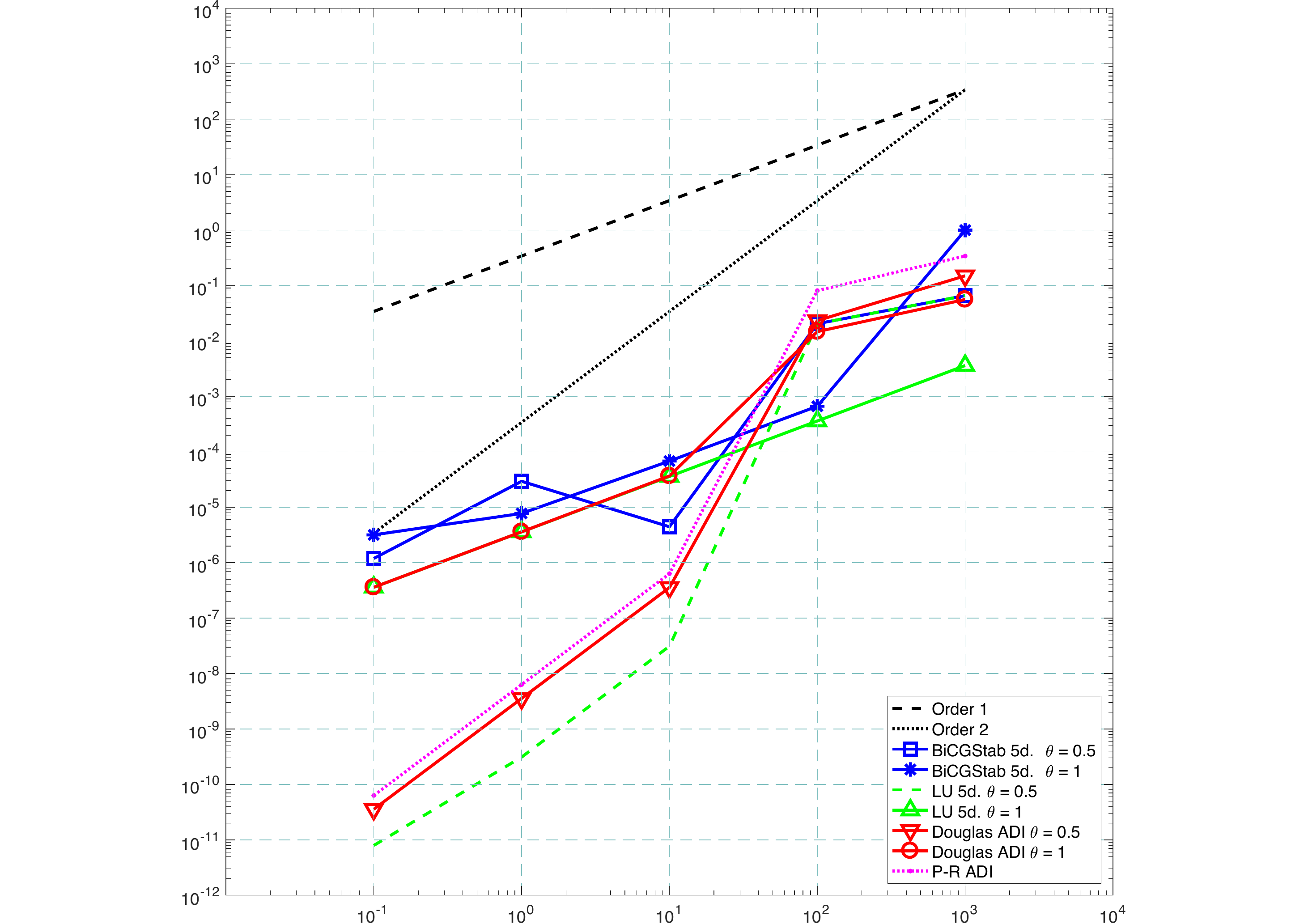}}
\caption{\red{Loglog order plot: $\tau$ vs.\ rRMSE (T=5000)}.\label{fig: order and solution comparison}}
\end{subfigure}\quad
\begin{subfigure}[t]{0.48\textwidth}
\includegraphics[width=1.0\textwidth]{./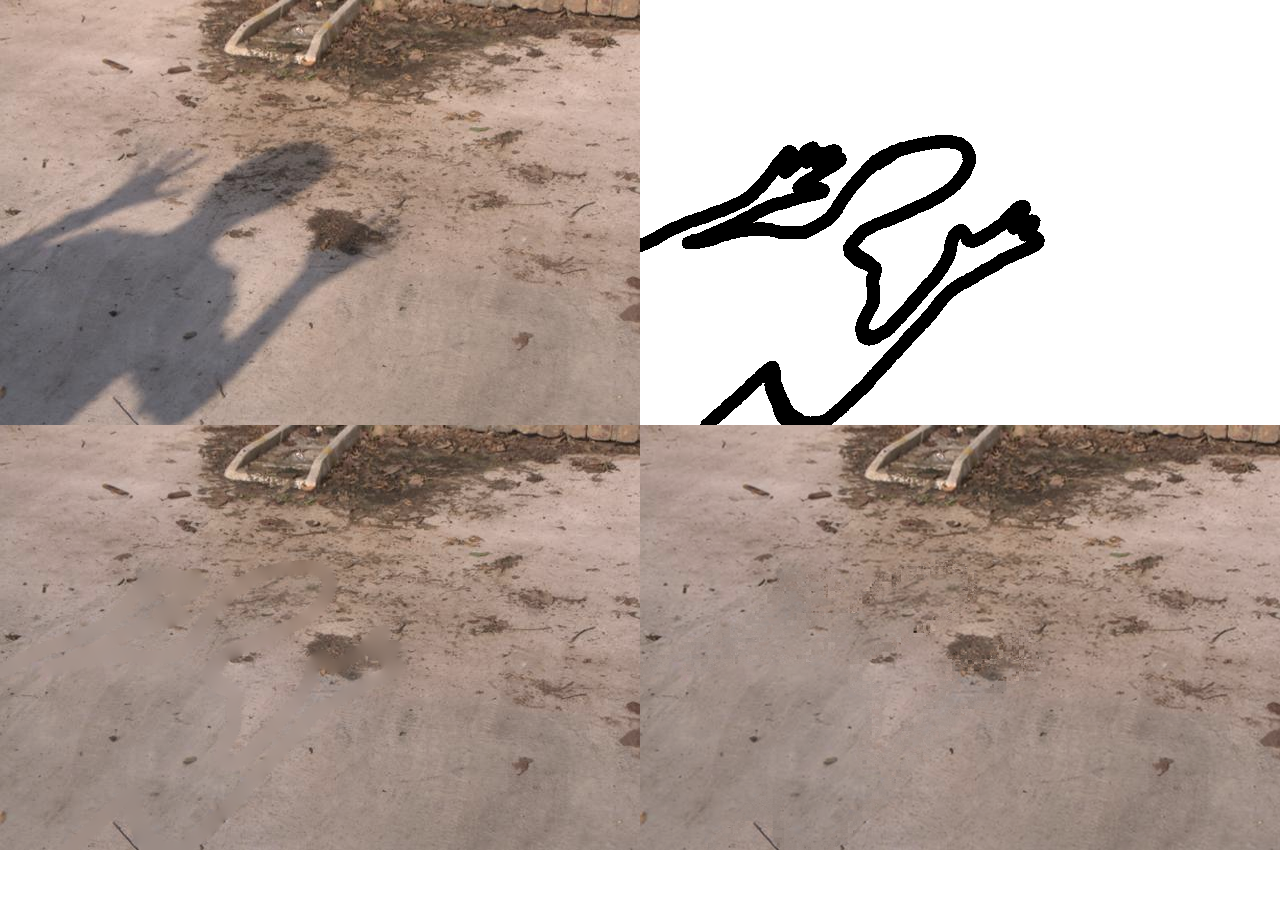}
\caption{\red{Shadow removal and inpainting}.\label{fig:shadow_removal}}
\end{subfigure}
\caption{\ref{fig: order and solution comparison}: \red{Relative RMS error at $T=5000$ for several $\tau$}. \ref{fig:shadow_removal}:  Shadow removal inputs (top); Douglas ADI solution and result after non-local inpainting \cite{arias} (bottom). Parameters: $\tau =10$; $\theta = 0.5$. Image from: \url{http://aqua.cs.uiuc.edu/site/projects/shadow.html}.}
\end{figure}

\section{Conclusions}
In this paper, we consider the \emph{image osmosis} model proposed in \cite{vogel,weickert} for imaging applications. We proposed dimensional splitting approaches to decompose the discretised 2D operator into two tridiagonal operators. We proved that the splitting numerical schemes preserve analogous properties holding in the continuous model. Numerically, these methods are efficient and accurate. We tested the proposed methods for a shadow removal problem, with an inpainting post-processing step. Future research directions include \red{the rigorous convergence proof of the discrete models to the continuum one as $N\to\infty$} \red{as well as} \red{a rigorous stability and convergence analysis for the Douglas method \eqref{eq: Douglas}.}

\ack
LC acknowledges the joint ANR/FWF Project ``Efficient Algorithms for Nonsmooth Optimization in Imaging" (EANOI) FWF n. I1148 / ANR-12-IS01-0003. CE acknowledges the support of GNCS-INDAM and PRIN 2012 N. 2012MTE38N.
\red{SP acknowledges UK EPSRC grant EP/L016516/1}. 

\section*{References}

\end{document}